\documentclass{article}
%\documentclass[12pt]{article}
%\usepackage{url,lineno}
%linenumbers

\usepackage{arydshln}

% https://manas.tungare.name/software/isbn-to-bibtex
% https://mathscinet.ams.org/mrlookup

\usepackage[utf8]{inputenc}
\usepackage{hyperref}
\usepackage{amsfonts,amssymb,amsthm,mathtools,amsmath}
\usepackage[english]{babel}

\usepackage{xcolor}

\usepackage[ruled,vlined]{algorithm2e}

\definecolor{mycolor}{RGB}{255,105,180}

\def\N{\mathbb{N}}
\def\bN{\mathbb N}

\def\cB{\mathcal B}
\def\cS{\mathcal S}
\def\cX{\mathcal X}

\newtheorem{definition}{Definition}[section]
\newtheorem{remark}[definition]{Remark}
\newtheorem{lemma}[definition]{Lemma}
\newtheorem{example}[definition]{Example}

\newtheorem{theorem}[definition]{Theorem}
\newtheorem{corollary}[definition]{Corollary}

\title{On reducible non-Weierstrass semigroups}

\author{
J. I. Garc\'{\i}a-Garc\'{\i}a
 \footnote{
     Departamento de Matem\'aticas/INDESS (Instituto Universitario para el Desarrollo Social Sostenible),
     Universidad de C\'adiz, E-11510 Puerto Real  (C\'{a}diz, Spain).
     E-mail: ignacio.garcia@uca.es.}\\
D. Mar\'{\i}n-Arag\'on
 \footnote{
     Departamento de Matem\'aticas,
     Universidad de C\'adiz, E-11510 Puerto Real  (C\'{a}diz, Spain).
     E-mail: daniel.marin@uca.es.}
     \\
F. Torres
 \footnote{
     IMECC/UNICAMP, R. Sérgio Buarque de Holanda, 651, Cidade Universitária “Zeferino Vaz”, 13083-859, (Campinas, SP, Brazil).
     E-mail: ftorres@unicamp.br.}
    \\
 A. Vigneron-Tenorio
 \footnote{
 	Departamento de Matem\'aticas/INDESS (Instituto Universitario para el Desarrollo Social Sostenible), Universidad de C\'adiz,
 	E-11406 Jerez de la Frontera (C\'{a}diz, Spain).
     E-mail: alberto.vigneron@uca.es.}
}

\begin{document}
\maketitle
\begin{abstract}
Weierstrass semigroups are well-known along the literature. We present a new family of non-Weierstrass semigroups which can be written as an intersection of Weierstrass semigroups. In addition, we provide methods for calculating non-Weierstrass semigroups with genus as large as desired.
\end{abstract}

{\small \emph{Keywords:} Additive combinatorics, Buchweitz semigroup, Numerical semigroup, Pseudo-Frobenius number, Weierstrass points, Weierstrass semigroup.}

{\small \emph{MSC-class:} {14H55, 11P70, 20M14}}

\section*{Introduction}

A {\em numerical semigroup} $H$ is an additive submonoid of the non-negative integers
$\bN$ whose complement $G(H)=\bN \setminus H$, the {\em set of gaps} of $H$, is finite;
its cardinality $g(H)=\# G(H)$ is the {\em genus} of $H$. The elements of $H$ will be
refereed as the {\em non-gaps} of $H$. A suitable reference for the background on numerical
semigroups that we assume is the book \cite{RGS}.

In Weierstrass Point Theory one associates a numerical semigroup $H(P)$ to any point $P$ of
a complex (projective, irreducible, non-singular, algebraic) curve $\cX$ in such a way that
its genus coincides with the genus $g$ of the underlying curve; see e.g. \cite[III.5.3]{FK}.
This semigroup is called the {\em Weierstrass semigroup} at $P$, and it is the set of pole
orders at $P$ of regular functions on $\cX\setminus\{P\}$. We have that the set of gaps of
$H(P)$ equals $\{1,\ldots,g\}$ for all but finitely many points $P$ which are the so-called
{\em Weierstrass points} of the curve; they carry a lot of information about the curve; see e.g. \cite[III.5.11]{FK}, \cite{Centina}.

A numerical semigroup $H$ is called {\em Weierstrass} if there is a pointed curve $(\cX,P)$  such that $H=H(P)$. Around 1893, Hurwitz \cite{Hurwitz} asked about the characterization of Weierstrass semigroups; long after that, in 1980, Buchweitz \cite{B1}, \cite{B2}  pointed out the following combinatorial symple criterion. Let $\cS(g)$ denote the collection of numerical semigroups of genus $g$, and for $n\geq 2$ an integer let
  \begin{equation}\label{eq1.1}
\cB_n(g)=\{H\in \cS(g):\, \# G_n(H)>(2n-1)(g-1)\},
  \end{equation}
be the subcollection of {\em $n$-Buchweitz semigroups} of genus $g$,
being $G_n(H)$ the $n$-fold sum of elements of the set of gaps of $H$. Then by the Riemann-Roch
theorem each element
of $\cB_n(g)$ is non-Weierstrass (see \cite[p. 122]{EH} for further historical information).
Although Buchweitz also pointed out that $\cB_n(g)\neq \emptyset$
for each $n$ and large $g$, Kaplan and Ye \cite[Thm. 5]{KY} noticed that
   $$
{\rm lim}_{g\to \infty}\frac{\# \cB(g)}{\# \cS(g)}=0\, ,
   $$
where $\cB(g)=\cup_{n\geq 2}\cB_n(g)$; in particular,
${\rm lim}_{g\to \infty}\# \cB_2(g)/\# \cS(g)=0$ which is a result
suggested by previous numerical computations in Komeda's paper \cite[\S2]{KomedaSForum}.
This shows in particular that it is difficult to give explicit examples of semigroups in
$\cB_2(g)$ for large $g$.

With the idea of studying Weierstrass semigroups and having methods to find $2$-Buchweitz semigroups, we introduce the concept of PF-semigroup.
A numerical semigroup is a PF-semigroup if its multiplicity is the difference between its genus and its type plus one, and its set of pseudo-Frobenius numbers coincides with its set of gaps greater than its multiplicity.
These semigroups are the key to prove that the set of Weierstrass semigroups is not closed by intersections.

Inspired by the definition of the Schubert index (see \cite{KomedaSForum}), we use sequences of positive integer numbers to construct families of PF-semigroups.
These sequences are used to calculate subfamilies of $2$-Buchweitz PF-semigroups, in particular, to generate semigroups of this type with large genera.
The main result of this work (Theorem \ref{pegando_secuencias}) gives us an operation such that from two sequences we obtain new sequences describing again families of $2$-Buchweitz PF-semigroups.

The content of this work is organized as follows. 
In the first section, we introduce the concepts of PF-semigroup, $m$-semigroup and $n$-Buchweitz semigroup, and study some of their properties.
Section 2 is devoted to the decomposition of PF-semigroups as the intersection of Weierstrass semigroups. 
Finally, in Section 3, we represent the PF-semigroups using the differences of their pseudo-Frobenius numbers and study conditions on those sets to obtain $2$-Buchweitz PF-semigroups. Theorem \ref{pegando_secuencias} will give us a method for \emph{pasting} these sequences to get $2$-Buchweitz PF-semigroups with genus as large as desired.

\section{Preliminaries and results}

Given a numerical semigroup $H$, the minimal set, according the inclusion, $\{h_1<\cdots<h_p\}$ such that $H=\{\sum_{i=1}^p x_ih_i\mid x_i\in\N\}$ is called the minimal system of generators of $H$, and we write $H=\langle h_1,\ldots,h_p\rangle$. The element $h_1$ is called the multiplicity of $H$ and it is denoted by $m(H)$.

The pseudo-Frobenius numbers of $H$
are the elements of $PF(H)=\{x\in G(H)\mid \forall h\in H\setminus\{0\},\, x+h\in H\}$. The cardinality of this set is the type of $H$, denoted by $t(H)$. Note that $\max G(H)=\max PF(H)$. This number is called the Frobenius number of $H$ and denoted by $Fb(H)$. It is trivial to check that if $x>Fb(H)$ then $x\in H$, and  the number $c(H)=Fb(H)+1$ is the conductor of $H$.

\begin{definition}
A numerical semigroup $H$ is a $PF$-semigroup if $G(H)$ is the set $\{1,\ldots ,g(H)-t(H)\}\sqcup PF(H)$, and $\min PF(H)>m(H)$.
\end{definition}

\begin{remark}
Note that if $H$ is a $PF$-semigroup then $m(H)=g(H)-t(H)+1$.
\end{remark}

\begin{lemma}\label{condition1}
Every $PF$-semigroup $H$ has Frobenius number odd, in particular, $Fb(H)= 2g(H)-2t(H) +1$.
\end{lemma}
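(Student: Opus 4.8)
The plan is to prove the equivalent statement $Fb(H) = 2m(H) - 1$, because by the Remark $m(H) = g(H) - t(H) + 1$, so that $2m(H) - 1 = 2g(H) - 2t(H) + 1$, a number that is automatically odd. Throughout I would write $m = m(H)$, $F = Fb(H)$, and use the two structural facts defining a PF-semigroup: the gaps split as $G(H) = \{1, \ldots, m-1\} \sqcup PF(H)$ (since $g(H) - t(H) = m - 1$), and every element of $PF(H)$ exceeds $m$. Consequently the nonzero non-gaps are exactly the integers $\geq m$ not lying in $PF(H)$, and the only quantity to pin down is the largest gap $F = \max PF(H)$. The entire argument rests on one trick: translating a gap by the multiplicity $m$ and combining closure of $H$ with the defining property of pseudo-Frobenius numbers.

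First I would prove the upper bound $F \leq 2m - 1$ by showing that no gap can be $\geq 2m$. Suppose $x \in G(H)$ with $x \geq 2m$ and examine $x - m \geq m$. If $x - m \in H$, then $x = (x-m) + m$ is a sum of two non-gaps, so $x \in H$, a contradiction. If instead $x - m \notin H$, then $x - m$ is a gap that is $\geq m$; since the only gaps below $m$ are the block $\{1, \ldots, m-1\}$, such a gap must belong to $PF(H)$, and then the pseudo-Frobenius property applied to $h = m \in H \setminus \{0\}$ gives $(x - m) + m = x \in H$, again a contradiction. Hence every gap is $< 2m$, so in particular $F \leq 2m - 1$.

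For the matching lower bound I would exhibit $2m - 1$ as a gap. Since $m \geq 2$ for any PF-semigroup, the integer $m - 1$ is a gap; and because $\min PF(H) > m > m-1$, it is not a pseudo-Frobenius number. By the definition of $PF(H)$ there must then exist $h \in H \setminus \{0\}$ with $(m-1) + h \in G(H)$. As $h \geq m$, this forces $(m-1) + h \geq 2m - 1$, while the upper bound already established gives $(m-1) + h \leq F \leq 2m - 1$; the two inequalities pin down $(m-1) + h = 2m - 1$ (so in fact $h = m$). Thus $2m - 1 \in G(H)$, whence $F = \max G(H) \geq 2m - 1$, and together with the upper bound $F = 2m - 1 = 2g(H) - 2t(H) + 1$.

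The main obstacle, and the place where the PF-semigroup hypotheses are genuinely used, is the lower bound: one must rule out that $\max PF(H)$ sits strictly below $2m - 1$. The hypothesis $\min PF(H) > m$ is exactly what is needed, since it guarantees that $m-1$ fails to be pseudo-Frobenius and hence produces a gap of the form $(m-1)+h$ as high up as the upper bound allows; without this condition, small examples show the Frobenius number can drop below $2m - 1$. I would also verify the harmless degenerate case, namely that $m \geq 2$ (so that $m - 1 \geq 1$ is a genuine gap), which holds because a semigroup admitting pseudo-Frobenius numbers larger than its multiplicity cannot have $m = 1$.
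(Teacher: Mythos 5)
Your proof is correct and takes essentially the same route as the paper's: both prove the upper bound by showing a gap $\geq 2m$ would force a gap $\geq m$ that must lie in $PF(H)$ and hence absorb $m\in H$, and both prove the lower bound by observing that $m-1 = g(H)-t(H)$ is a gap which is not pseudo-Frobenius, whose only possible witness is $h=m$, so that $2m-1$ is a gap. Your sole (cosmetic) deviation is in the upper bound, where you subtract $m$ once with a case split instead of the paper's division with remainder $h = d(g-t+1)+r$.
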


\begin{proof}
Let $H$ be a $PF$-semigroup, $t=t(H)$ and $g=g(H)$, then $G(H)=\{1,\ldots,g-t\}\sqcup PF(H)$.

Since $m(H)=g-t+1$, we assume there exists $h\in G(H)$ such that $h>2g-2t+1$. 
The element $2m(H)=2g-2t+2$ belongs to $H$, so $h> 2g-2t+2$. In this case, $h=d(g-t+1)+r$ where the integers $d$ and $r$ satisfy that $d\ge 2$ and $r\in [1,g-t]$. Thus, $h=(d-1)(g-t+1)+g-t+1+r$. Note that $(d-1)(g-t+1)\in H$ and as $h\not\in H$, then $g-t+1+r\in G(H)\setminus PF(H)$. That is, $H$ is not a $PF$-semigroup. We can affirm every integer greater than or equal to $2g-2t+2$ belongs to $H$.

Now, since $g-t+h\ge 2g-2t+2$ for every $h\in H\setminus\{0,g-t+1\}$ and $g-t\in G(H)\setminus PF(H)$, $2g-2t+1=(g-t)+(g-t+1)$ is a gap of $H$. Therefore, $Fb(H)=2g-2t+1$.
\end{proof}

The condition $Fb(H)= 2g(H)-2t(H) +1$ from previous lemma does not imply that $H$ is a $PF$-semigroup. For example, the semigroup $\langle 5, 6, 14\rangle$ satisfies $Fb(H)= 2g(H)-2t(H) +1$ but it is not a $PF$-semigroup.

\begin{definition}(See \cite{DecompositionRosales})
We say that a numerical semigroup is irreducible if it cannot be expressed as an intersection of two numerical semigroups containing it properly.
\end{definition}

\begin{definition}
A $m$-semigroup is a numerical semigroup with multiplicity $m$.
\end{definition}

Throughout this paper a $g$-semigroup is numerical semigroup with multiplicity and genus $g$. Note that by \cite[Theorem 14.5]{Pinkham}, all $g$-semigroups are Weierstrass.

Given $G$ the set of gaps of a numerical semigroup $H$, we denote by $G_n(H)$ the set $\{g_1+\cdots +g_n\mid g_1,\ldots,g_n\in G\}$. Note that the set $G_n(H)$ is known as $nG=G+G+\dots+G$ in numerical additive theory.

\begin{definition}
A numerical semigroup $H$ is called $n$-Buchweitz semigroup for some $n\geq 2$ if the cardinality of $G_n(H)$ is strictly greater than $(2n-1)(g(H)-1)$.
\end{definition}

Several papers study $n$-Buchweitz semigroups due to it is known that these semigroups are non-Weierstrass (see \cite{B1} or \cite{KY}). For example, in \cite[page 161]{KomedaSForum}, it is introduced a computational result showing that there are not $2$-Buchweitz semigroups with genus strictly smaller than 16. 
\section{Irreducible decomposition of $PF$-semigroups}

The decomposition of a numerical semigroup as the intersection of irreducible numerical semigroups has been studied in several papers (see \cite{DecompositionRosales} and the references therein). In this section, a decomposition of $PF$-semigroups by means of irreducible Weierstrass semigroups is introduced. We use this decomposition to show that the set of Weierstrass semigroups is not closed by intersection.

\begin{lemma}\label{lemma_decomposition}
Let $H$ be a $PF$-semigroup with type $t$ and set of gaps $G(H)=\{1,\ldots ,g(H)-t\}\sqcup PF(H)$. Then there exist $H_1,\ldots ,H_{t}$ irreducible $g_i$-semigroups of genus $g_i$ respectively such that $H=H_1\cap \cdots \cap H_t$.
\end{lemma}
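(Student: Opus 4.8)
The plan is to attach to each pseudo-Frobenius number a single irreducible semigroup whose only ``large'' gap is that number, and then to check that the union of all their gap sets reconstructs $G(H)$. Write $m:=m(H)$. By Lemma~\ref{condition1} together with the remark giving $m=g(H)-t+1$, we have $Fb(H)=2m-1$, so that $PF(H)\subseteq\{m+1,\dots,2m-1\}$ with $2m-1=Fb(H)\in PF(H)$. This confinement of the pseudo-Frobenius numbers to the interval between $m$ and $2m$ is exactly what makes the construction below go through.

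For each $f\in PF(H)$ set $g_f:=\lfloor f/2\rfloor+1$ and define $H_f:=\{0\}\cup\big(\{g_f,g_f+1,\dots\}\setminus\{f\}\big)$, i.e.\ the set whose gaps are $\{1,\dots,g_f-1\}\cup\{f\}$. First I would verify that $H_f$ is a numerical semigroup: since $f\le 2g_f-1<2g_f$, any sum of two nonzero elements of $H_f$ is at least $2g_f>f$, hence cannot equal $f$ and lies in $H_f$; thus $H_f$ is closed under addition, has multiplicity $g_f$ and genus $g_f$, so it is a $g_f$-semigroup. Next I would identify its Frobenius number as $f$ and read off its irreducibility from parity: if $f$ is odd then $f=2g_f-1$ and $H_f$ is symmetric, while if $f$ is even then $f=2g_f-2$ and $H_f$ is pseudo-symmetric; in either case $H_f$ is irreducible (and, being a $g_f$-semigroup, it is moreover Weierstrass by \cite[Theorem 14.5]{Pinkham}).

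It then remains to establish the two inclusions yielding $H=\bigcap_{f\in PF(H)}H_f$. Because $f\le 2m-1$ we get $g_f=\lfloor f/2\rfloor+1\le m$, so $\{1,\dots,g_f-1\}\subseteq\{1,\dots,m-1\}\subseteq G(H)$ and $f\in PF(H)\subseteq G(H)$; hence $G(H_f)\subseteq G(H)$, i.e.\ $H\subseteq H_f$ for every $f$. Conversely, taking the union of the gap sets gives $\bigcup_{f\in PF(H)}G(H_f)=\big(\bigcup_f\{1,\dots,g_f-1\}\big)\cup PF(H)$; since the choice $f=Fb(H)=2m-1$ yields $g_f=m$, the first union equals $\{1,\dots,m-1\}$, and therefore $\bigcup_f G(H_f)=\{1,\dots,m-1\}\cup PF(H)=G(H)$. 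As the gap set of an intersection is the union of the gap sets, this proves $\bigcap_{f\in PF(H)}H_f=H$; indexing $PF(H)=\{f_1,\dots,f_t\}$ and setting $H_i:=H_{f_i}$ gives the required $H_1,\dots,H_t$.

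The step I expect to be the main obstacle is not the closure or irreducibility check, which is routine once the parity dichotomy is in place, but rather pinning down the range $PF(H)\subseteq\{m+1,\dots,2m-1\}$ and, above all, ensuring that the ``small'' gaps $\{1,\dots,m-1\}$ are recovered by the union. This recovery is guaranteed precisely because $Fb(H)=2m-1$ is itself a pseudo-Frobenius number, forcing $g_f=m$ for that index; everything else follows once Lemma~\ref{condition1} fixes the numerology.
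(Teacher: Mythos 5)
Your proposal is correct and follows essentially the same route as the paper: for each $f\in PF(H)$ you build the semigroup with gaps $\{1,\ldots,g_f-1,f\}$ where $g_f=(f+1)/2$ or $(f+2)/2$ according to parity (exactly the paper's $H_i$), and you recover $G(H)$ as the union of the gap sets using $Fb(H)=2m-1$ from Lemma~\ref{condition1} to see that the largest $g_f$ equals $m$. Your only additions are the explicit closure, multiplicity, and symmetric/pseudo-symmetric irreducibility checks, which the paper leaves implicit.
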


\begin{proof}
Assume that $PF(H)=\{f_1<\cdots <f_t=Fb(H)\}$. For $i\in \{1,\ldots,t\}$, we define the $g_i$-semigroup $H_i$ given by the set of gaps $\{1,\ldots ,g_i-1,2g_i-1\}$ with $g_i=(f_i+1)/2$ if $f_i$ is odd, or by $\{1,\ldots ,g_i-1,2g_i-2\}$ with $g_i=(f_i+2)/2$ in other case.

In order to prove that $H=H_1\cap \cdots \cap H_t$, we show that $G(H)=G(H_1)\cup \cdots \cup G(H_t)$. By construction, both sets of gaps are equal for every element greater than or equal to $f_1$.

For any semigroup $H_i$, $G(H_i)\setminus \{f_i\}=\{1,\ldots, g_i-1\}$. So, $G(H)$ is equal to $G(H_1)\cup \cdots \cup G(H_t)$ if and only if $\max_{i\in \{1,\ldots,t\}}\{g_i-1\}=g(H)-t$. Note that by Lemma \ref{condition1} $Fb(H)$ is odd, and that maximal element is $g_{t}-1=(Fb(H)+1)/2-1$. Since $Fb(H)=2g(H)-2t+1$, equality $\max_{i\in \{1,\ldots,t\}}\{g_i-1\}=g(H)-t$ holds.
\end{proof}

Since every $g$-semigroup is Weierstrass, we obtain the following result.

\begin{corollary}
Any $PF$-semigroup is the intersection of Weierstrass semigroups.
\end{corollary}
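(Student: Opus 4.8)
The plan is to deduce this immediately from Lemma \ref{lemma_decomposition}, so essentially all the work is already done. First, given an arbitrary $PF$-semigroup $H$ with type $t = t(H)$, I would apply Lemma \ref{lemma_decomposition} to obtain irreducible semigroups $H_1,\ldots,H_t$ with $H = H_1 \cap \cdots \cap H_t$, where $H_i$ is a $g_i$-semigroup of genus $g_i$. The decomposition produced there is completely explicit: each $H_i$ is the semigroup whose gap set is $\{1,\ldots,g_i-1,2g_i-1\}$ or $\{1,\ldots,g_i-1,2g_i-2\}$, and in particular each $H_i$ is genuinely a $g$-semigroup, i.e.\ its multiplicity coincides with its genus.

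Second, I would invoke the fact recorded just before this corollary, namely that by \cite[Theorem 14.5]{Pinkham} every $g$-semigroup is Weierstrass. Applying this to each factor $H_i$ shows that every $H_i$ is a Weierstrass semigroup. Combining the two steps, $H$ is exhibited as the intersection $H_1 \cap \cdots \cap H_t$ of Weierstrass semigroups, which is exactly the assertion.

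There is no genuine obstacle here: the real content, namely constructing the factors and checking that their gap sets glue to give $G(H)$, was handled in the proof of Lemma \ref{lemma_decomposition}, and the only remaining ingredient is Pinkham's theorem. The single point that merits a moment's attention is that the factors $H_i$ really are $g$-semigroups, so that Pinkham's result applies verbatim; this is immediate from their explicit gap sets, since in each case the multiplicity equals the genus $g_i$.
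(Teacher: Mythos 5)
Your proposal is correct and matches the paper's argument exactly: the paper derives this corollary as an immediate consequence of Lemma \ref{lemma_decomposition} together with the remark (via Pinkham's Theorem 14.5) that every $g$-semigroup is Weierstrass. Your additional check that the factors $H_i$ are genuinely $g$-semigroups is the same observation the paper builds into the construction of the $H_i$ in the lemma, so nothing is missing.
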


\begin{example}\label{examples_Oliveira}
Let  $H$ and $H'$ be the semigroups given by the set of gaps
$$G(H)=\{ 1,\ldots ,g-4,2g-13,2g-11,2g-8,2g-7\}$$
and
$$G(H')=\{ 1,\ldots ,g-3,2g-29,2g-9,2g-5\}.$$
We have that both families of semigroups are $2$-Buchweitz and $4$-Buchweitz semigroups for genus $g$ greater than or equal to $16$ and $99$, respectively (see \cite[Example 1]{OliveiraSForum}). These semigroups can be decomposed as Lemma \ref{lemma_decomposition}:
\begin{enumerate}
    \item $H=H_1\cap \cdots \cap H_4$ where
        \begin{enumerate}
            \item $G(H_1)= \{1,\ldots g-7\}\cup \{2g-13\}$,
            \item  $G(H_2)= \{1,\ldots ,g-6\}\cup \{2g-11\}$,
            \item $G(H_3)= \{1,\ldots ,g-4\}\cup \{2g-8\}$,
            \item and $G(H_4)= \{1,\ldots ,g-4\}\cup \{2g-7\}$.
        \end{enumerate}
    \item $H'=H'_1\cap H'_2 \cap H'_3$ where
        \begin{enumerate}
            \item $G(H'_1)= \{1,\ldots ,g-13\}\cup \{2g-29\}$,
            \item $G(H'_2)= \{1,\ldots ,g-5\}\cup \{2g-9\}$,
            \item and $G(H'_3)= \{1,\ldots ,g-3\}\cup \{2g-5\}$.
        \end{enumerate}
\end{enumerate}
\end{example}

Note that, in general, the intersection of $g_i$-semigroups with genus $g_i$ is not a $PF$-semigroup. For example, fixing a set of $g_i$-semigroups with genus $g_i$ where the maximum of its Frobenius numbers is even, its intersection is not a $PF$-semigroup.

From Example \ref{examples_Oliveira}, we obtain that intersection of Weierstrass semigroups is not necessarily Weierstrass, so we can affirm that the set of Weierstrass semigroups is not closed by intersection.

\begin{corollary}
The set of Weierstrass semigroups is not closed by intersection.
\end{corollary}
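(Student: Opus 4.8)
The plan is to produce a single numerical semigroup that is non-Weierstrass yet arises as a finite intersection of Weierstrass semigroups; exhibiting one such example already shows that the class of Weierstrass semigroups is not closed under intersection. I would use the family $H$ of Example~\ref{examples_Oliveira}, fixing any genus $g\geq 16$, and establish two facts about it independently.

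First I would check that $H$ is \emph{not} Weierstrass. By Example~\ref{examples_Oliveira}, $H$ is a $2$-Buchweitz semigroup whenever $g\geq 16$; that is, $\#G_2(H)>3(g(H)-1)$. Combined with the Buchweitz/Riemann--Roch criterion recalled in the Introduction (every $n$-Buchweitz semigroup is non-Weierstrass, see \cite{B1} and \cite{KY}), this immediately yields that $H$ is non-Weierstrass.

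Second I would realize $H$ as an intersection of Weierstrass semigroups. Example~\ref{examples_Oliveira} displays the explicit decomposition $H=H_1\cap H_2\cap H_3\cap H_4$ coming from Lemma~\ref{lemma_decomposition}, in which each $H_i$ is an irreducible $g_i$-semigroup, i.e.\ a numerical semigroup whose multiplicity equals its genus. By \cite[Theorem~14.5]{Pinkham} every such $g_i$-semigroup is Weierstrass, so each of $H_1,\dots,H_4$ is Weierstrass.

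Putting the two together, $H$ is the intersection of the Weierstrass semigroups $H_1,\dots,H_4$ while being itself non-Weierstrass, which is exactly the assertion of the corollary. The conceptual crux of a statement like this is the \emph{existence} of a non-Weierstrass semigroup that decomposes into Weierstrass pieces; once Example~\ref{examples_Oliveira} and Lemma~\ref{lemma_decomposition} are in hand, no new work remains, since the two substantive ingredients—the Buchweitz inequality $\#G_2(H)>3(g-1)$ and the fact that the factors produced by Lemma~\ref{lemma_decomposition} are $g_i$-semigroups (hence Weierstrass by Pinkham)—are already available. The only routine verification is that the fixed genus satisfies $g\geq 16$, the range in which $H$ is genuinely $2$-Buchweitz.
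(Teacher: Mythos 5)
Your proposal is correct and follows exactly the paper's own argument: the paper derives the corollary from Example~\ref{examples_Oliveira} (whose semigroup $H$ is $2$-Buchweitz, hence non-Weierstrass for $g\geq 16$) together with the decomposition of Lemma~\ref{lemma_decomposition} into $g_i$-semigroups, which are Weierstrass by \cite[Theorem~14.5]{Pinkham}. No substantive difference from the paper's reasoning.
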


It is possible that some intersections to be a Weierstrass semigroup. Consider the semigroups $H_1$, $H_2$ and $H_3$ given by the gap-sets $\{1,2,3,6\}$, $\{1,2,3,4,8\}$ and $\{1,2,3,4,9\}$, respectively. These semigroups are $4$-semigroup and $5$-semigroups, respectively. Their intersection is the $PF$-semigroup $H$ generated by $\{5,7,11,13\}$. The semigroups $H_1$, $H_2$ and $H_2$, as well as $H$ are Weierstrass because their multiplicity are smaller than or equal to 5 (see \cite[Page 42]{commutative-grillet}\label{t:weierstrassm5}). 

\section{Computing $2$-Buchweitz $PF$-semigroups}

In this section we focus on the study of $PF$-semigroups $H$ with genus $g$, type $t$ and gap-set $G(H)=\{1,\ldots ,g-t, 2(g-t+1)-a_1,\ldots , 2(g-t+1)-a_t \}$, where $a_1,\ldots ,a_t\in \N$ satisfying that $a_1>a_2>\cdots >a_{t-1}>a_t=1$.
To simplify, we denote 
$m(H)=g-t+1$ by $m$. Then, $G(H)=\{1,\ldots ,g-t, 2m-a_1,\ldots , 2m-a_t \}$.

\begin{theorem}\label{existencia_with_gaps}
Let $\{a_1>a_2>\cdots > a_{t-1}>a_t=1\}\subset \N$ and $t\ge 2$, for every  non-negative integer $g$ satisfying \begin{enumerate}
    \item $g\ge 2a_1+t-1$;
    \item\label{cond_card} $\#\big\{a_i+a_j\mid i,j\in\{1,2,\ldots,t\}\big\}> 3(t-1)$;
\end{enumerate}
the semigroup $H$ with gap-set  $G(H)=\{1,\ldots ,g-t, 2m-a_1,\ldots , 2m-a_t \}$ is a $2$-Buchweitz $PF$-semigroup.
\end{theorem}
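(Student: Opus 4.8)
The plan is to verify the three assertions in turn: that $H$ is a well-defined numerical semigroup, that it is a $PF$-semigroup of type exactly $t$, and finally that it is $2$-Buchweitz. Throughout write $m=g-t+1$, $B=\{1,\dots,m-1\}$ and $F=\{2m-a_1,\dots,2m-a_t\}$, so that $G(H)=B\sqcup F$. Condition (1) gives $m\ge 2a_1$, hence $a_1\le m/2$, the inequality I expect to use repeatedly. A first consequence is $2m-a_i\ge 2m-a_1\ge 3m/2>m-1$, so the two blocks are disjoint, $F\subset(m,2m)$, and $\#G(H)=(m-1)+t=g$. For the semigroup property the key observation is that every positive non-gap is at least $m$ while the conductor is $2m$: all integers $\ge 2m$ are non-gaps, and $2m-a_t=2m-1$ is a gap. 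Hence for any two positive non-gaps $x,y$ one has $x+y\ge 2m=c(H)$, so $x+y\in H$; together with $0\in H$ this shows $H$ is closed under addition, so $H$ is a numerical semigroup of genus $g$ and multiplicity $m$.

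Next I would identify $PF(H)$. For the inclusion $F\subseteq PF(H)$, adding any positive non-gap $h\ge m$ to $2m-a_i$ yields $2m-a_i+h\ge 3m-a_i>2m=c(H)$ (using $a_i\le m/2$), hence a non-gap, so each $2m-a_i$ is pseudo-Frobenius. For the reverse, I would show that no $x\in B$ lies in $PF(H)$ by exhibiting, for each such $x$, a non-gap $h$ with $x+h\in G(H)$. If $x\le m-a_1$, take $h=(2m-a_1)-x\ge m$; it is a non-gap because $h=2m-(a_1+x)$ with $a_1+x>a_1=\max\{a_i\}$, and $x+h=2m-a_1\in F$. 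If instead $m-a_1<x\le m-1$, take $h=(2m-1)-x\ge m$; here $h=2m-(x+1)$ with $x+1\ge m-a_1+2>a_1$ (using $m\ge 2a_1$), so $h$ is again a non-gap and $x+h=2m-1\in F$. Thus $PF(H)=F$, so $t(H)=t$, $G(H)=\{1,\dots,g-t\}\sqcup PF(H)$, and $\min PF(H)=2m-a_1>m$; hence $H$ is a $PF$-semigroup.

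Finally, for the $2$-Buchweitz property I would split $G_2(H)=(B+B)\cup(B+F)\cup(F+F)$ and separate these pieces by their ranges. One has $B+B=\{2,\dots,2m-2\}$, while $F+F=\{4m-(a_i+a_j)\}$; since $a_i+a_j\le 2a_1\le m$ this block lies in $[3m,4m-2]$, and its cardinality equals $\#\{a_i+a_j\mid i,j\}$ because $s\mapsto 4m-s$ is injective. The interval $[2,3m-2]$ is covered by $(B+B)\cup(B+F)$: the sums $b+(2m-1)$ with $b\in B$ give $[2m,3m-2]$, and $2m-1=(a_1-1)+(2m-a_1)\in B+F$. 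As $3m-2<3m$, the set $[2,3m-2]$ is disjoint from $F+F$, so $\#G_2(H)\ge(3m-3)+\#\{a_i+a_j\}$. Since $3m-3=3(g-t)$, Condition (2) yields $\#G_2(H)>3(g-t)+3(t-1)=3(g-1)$, which is exactly the $2$-Buchweitz inequality.

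I expect the genuinely delicate point to be the $2$-Buchweitz count: the whole argument hinges on the range separation $\max(B+F)=3m-2<3m\le\min(F+F)$, which is precisely what Condition (1), in the form $a_1\le m/2$, guarantees, whereas Condition (2) is needed only at the very end to push the bound past $3(g-1)$. Verifying that $(B+B)\cup(B+F)$ really fills the entire interval $[2,3m-2]$ (in particular recovering the value $2m-1$) is the step most prone to off-by-one slips, so I would treat that boundary case explicitly.
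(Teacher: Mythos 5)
Your proof is correct and follows essentially the same route as the paper's: the same pseudo-Frobenius witnesses ($h=2m-a_1-x$ for small $x$ and $h=2m-1-x$ for large $x$) and the same decomposition of $G_2(H)$ into the interval $[2,3m-2]$ plus the disjoint block $\{4m-(a_i+a_j)\}$, with the range separation coming from $2a_1\le m$ and Condition (2) supplying the final count. Your only deviations are minor tightenings: you verify explicitly that $H$ is a numerical semigroup (left implicit in the paper), and you cover $[2,3m-2]$ using just $B+B$, $B+(2m-1)$ and the single element $2m-1=(a_1-1)+(2m-a_1)$, rather than chaining all the intervals $B+(2m-a_i)$ as the paper does.
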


\begin{proof}
Prove that H is a PF-semigroup. Note that for every $h\in H\cap [m,2m-1]$, $h+m>2m-1$ and then, $h+m\in H$. Thus, the elements $2m-a_1,\ldots$, and $2m-a_t$ are pseudo-Frobenius numbers of $H$. Consider now the subset $\{1,\ldots ,g-t\}\subset G(H)$. For all $k\in\{1,\ldots, m-a_1\}$, $2m-a_1-k\in H$ and $k+2m-a_1-k=2m-a_1\in G(H)$. That means $\{1,\ldots ,m-a_1\}\subset PF(H)$. Analogously, for every $k\in \{a_1,\ldots ,g-t\}$, using that $2m-1-k\in H$, we obtain $k+2m-1-k =2m-1\in G(H)$, and then $\{a_1,\ldots, g-t\}\subset PF(H)$. By hypothesis, $g$ is greater than or equal to $2a_1+t-1$, thus $\{1,\ldots , m-a_1\}\cap \{a_1,\ldots ,g-t\}$ is no empty and $m<2m-a_1$. Therefore, $H$ is a PF-semigroup.

We describe now explicitly the set $G_2(H)$ (recall that $m=g-t+1$),
$$
\begin{array}{ccl}
G_2(H) & = &  \{2,\ldots ,2m-2\} \cup \{2m-a_1+1,\ldots ,3m-a_1-1\}  \\
   &   & \cup \{2m-a_2+1,\ldots ,3m-a_2-1 \}\cup \cdots \\
   &   & \cup \{2m-a_{t-1}+1,\ldots ,3m-a_{t-1}-1 \}\cup \{2m,\ldots ,3m-2\} \\
   &   & \cup \{4m-2a_1,4m-a_1-a_2,\ldots ,4m-a_{t-1}-1,4m-2\}.
\end{array}
$$
Since $a_1>1$, we have $2m-a_1+1\le 2m-2$. Also, using that $g\ge 2a_1+t-1$, $m\ge 2a_1>a_1+1>a_2+1> \cdots >a_{t}+1$, and then $2m-a_{i+1}+1\le 3m-a_{i}-1\le 3m-a_{i+1}-1< 4m-2a_1$ for every $i\in \{1,\ldots ,t-1 \}$. Therefore, $G_2(H)=\{2,3,\ldots ,3m-1,3m-2\}\sqcup \{4m-2a_1,4m-a_1-a_2,\ldots ,4m-2\}$, and its cardinality is $3m-3+\#\big\{a_i+a_j\mid i,j\in\{1,2,\ldots,t\}\big\}$. By Condition \ref{cond_card}, this cardinality is strictly greater than $3(g-1)$. Thus, $H$ is a $2$-Buchweitz semigroup.

\end{proof}

In \cite{KomedaSForum}, the concept of Schubert index is defined: given a numerical semigroup with gap-set $\{l_1<l_2<\cdots < l_g\}$, for any $i=0,\ldots ,g-1$, set $\alpha_i= l_{i+1}-i-1$, the tuple $(\alpha_0,\ldots ,\alpha_{g-1})$ is called the Schubert index associated with the semigroup.
Moreover, \cite[Proposition 2.2]{KomedaSForum} introduces some $2$-Buchweitz semigroups that are $PF$-semigroups. 
For example, the case (3) into this proposition studies the semigroup of genus $g=q+p+1$ and Schubert index $\alpha =(0,\ldots ,0,q-2p,q-2p,q-2p+1,\ldots ,q-2p+p-1,q-2p+p-1)\in \{0\}^q\times \N^{g-q}$, with $q\ge 4p$ and $p\ge 3$ ($q,p\in \mathbb{N}$). Its gap-set is $\{1,\ldots q, 2q-2p+1,2q-2p+2,2q-2p+2+2\cdot 1,\ldots ,2q-2p+2+2(p-1)=2q,2q+1\}$.
Since $q+1+(2q-2p+1)= 3q-2p+2\ge 2q+4p-2p+2> 2q+2$, $(q+1)+\{2q-2p+1,2q-2p+2,2q-2p+2+2\cdot 1,\ldots ,2q-2p+2+2(p-1)=2q,2q+1\}$ is contained in the semigroup, the elements in $\{2q-2p+1,2q-2p+2,2q-2p+2+2\cdot 1,\ldots ,2q-2p+2+2(p-1)=2q,2q+1\}$ are pseudo-Frobenius numbers.
If we consider any $j\in\{2p+1,\ldots , q\}$, $2q+1 = j+  (2q+1) - j $, and the elements in $\{2p+1,\ldots , q\}$ are not pseudo-Frobenius numbers.
Something similar happens to the numbers in $\{1,\ldots , 2p \}$. In this case, $2q-2p=j+(2q-2p)-j$ for all $j\in\{1,\ldots ,2p \}$. That is to say the semigroup in Proposition 2.2 (3) is a $PF$-semigroup.
Analogously, it can be proved that the types (1) and (2) are $PF$-semigroups.

Given a sequence $\mathbf{d}=(d_1, \ldots, d_{t-1})\in(\mathbb{N}\setminus\{0\})^{t-1}$, we consider the numerical semigroup with gap-set $\{1,\ldots ,g-t,2m-1-\sum_{i=1}^{t-1}d_{i} < \cdots< 2m-1-d_{t-2}-d_{t-1} < 2m-1-d_{t-1}< 2m-1\}$. For a PF-semigroup $H$ with $G(H)=\{1,\ldots ,g-t, 2m-a_1,\ldots , 2m-a_t \}$, the set $G(H)$ is equal to $\{1,\ldots ,g-t,2m-1-\sum_{i=1}^{t-1}d_{i}, \ldots, 2m-1-d_{t-1}, 2m-1\}$ where $d_i=a_i-a_{i+1}$ for $i=1, \ldots ,t-1$.

For instance, the sequence of the above mentioned example of \cite{KomedaSForum} type (3) is $\mathbf{d}=(1,2,\ldots ,2,1)\in\mathbb{N}^{p+1}$, and the sequences for the types (1) and (2) are $(2,\ldots ,2,3,1)$ and $(1,3,2,\ldots ,2)$, respectively.
For a $PF$-semigroup with gap-set
$\{1,\ldots ,g-t,2m-1-\sum_{i=1}^{t-1}d_{i}, \ldots,  2m-1-d_{t-1}, 2m-1\},$
the relation between the sequence $\mathbf{d}=(d_1, \ldots, d_{t-1})\in(\mathbb{N}\setminus\{0\})^{t-1}$ and its Schubert index $\alpha=(\alpha_0,\ldots ,\alpha _{g-1})$ is the following: for any non-negative integer $i\leq g-t-1$, $\alpha _i=0$, and for any $i\in \{g-t+1,\ldots ,g-1\}$, $d_{i-g+t}=\alpha _{i} -\alpha_{i-1}+1$.

\begin{definition} Given a sequence $\mathbf{d}=(d_1, \ldots ,d_{t-1})\in(\mathbb{N}\setminus\{0\})^{t-1}$, we say that $\mathbf{d}$ is a $(g,t)$-Buchweitz sequence if there exists a $2$-Buchweitz $PF$-semigroup with genus $g$, type $t$ and gap-set
$G(H)=\{1,\ldots ,g-t,2m-1-\sum_{i=1}^{t-1}d_{i}, \ldots,  2m-1-d_{t-1}, 2m-1\}$.
\end{definition}

Several $(g,t)$-Buchweitz sequences can be constructed. The next example provides one.

\begin{example}
Consider the sequence ${\mathbf d}=(7,1,2,1)$ and $H$ the $PF$-semigroup with genus $g$ and $G(H)=\{1,\ldots, g-5\}\cup \{2g-20,2g-13,2g-12,2g-10,2g-9\}$. The set $G_2(H)$ is
$$
\begin{array}{ccl}
G_2(H) & = & \{2,\ldots ,2g-10\}\cup \{2g-19,\ldots ,3g-25\}\cup \{4g-40\} \\
    & & \cup \{2g-12,\ldots ,3g-18\} \cup \{4g-33,4g-26\} \\
    & & \cup\{2g-11,\ldots ,3g-17\} \cup \{4g-32,4g-25,4g-24\}  \\
    & & \cup\{2g-9,\ldots ,3g-15\} \cup \{4g-30,4g-23,4g-22,4g-20\}  \\
    & & \cup\{2g-8,\ldots ,3g-14\} \cup \{4g-29,4g-22,4g-21,4g-19, 4g-18\}\\
    & = & \{2,\ldots ,3g-14\}\cup \{4g-40,4g-33,4g-32,4g-30,4g-29\}\cup \\
    & & \bigcup_{i=0}^8 \{4g-26+i\}.\end{array}
$$
Note that if $3g-14 \leq 4g-40$, then $g\geq26$. If this occurs, the cardinality of the set $G_2(H)$ is greater than or equal to $3g-2$ and $3g-2>3(g-1)$. Thus, ${\mathbf d}$ is a $(g,5)$-Buchweitz sequence for every $g\geq 26$.
\end{example}

The following result determines the conditions that a given sequence must satisfy for being a $(g,t)$-Buchweitz sequence for each integer large enough $g$. Moreover, we prove that the reverse of a $(g,t)$-Buchweitz sequence is also a $(g,t)$-Buchweitz sequence.

\begin{corollary}\label{existencia}
Let $\mathbf{d}=(d_1,\ldots, d_{t-1})\in(\mathbb{N}\setminus\{0\})^{t-1}$ and $\mathbf{d'}=(d_{t-1},d_{t-2},\ldots, d_1)\in(\mathbb{N}\setminus\{0\})^{t-1}$. Then, for every non-negative integer $g$ satisfying
\begin{enumerate}
    \item $g\geq 2\sum_{i=1}^{t-1}d_i+t+1$;
    \item $\#\big\{\sum_{i=n}^{t-1}d_i+\sum_{j=m}^{t-1}d_j\mid n,m\in\{1,2,\ldots,t-1\}\big\} > 3(t-1)$;
\end{enumerate}
the sequences ${\mathbf d}$ and ${\mathbf d'}$  are  $(g,t)$-Buchweitz sequences.
\end{corollary}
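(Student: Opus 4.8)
The plan is to reduce Corollary 2.15 to the already-established Theorem \ref{existencia_with_gaps} by translating the sequence $\mathbf{d}=(d_1,\ldots,d_{t-1})$ into the associated gap data $a_1>a_2>\cdots>a_{t-1}>a_t=1$. Recall from the discussion preceding the corollary that a $(g,t)$-Buchweitz sequence $\mathbf{d}$ corresponds to a $PF$-semigroup with gap-set $\{1,\ldots,g-t, 2m-1-\sum_{i=1}^{t-1}d_i,\ldots,2m-1-d_{t-1},2m-1\}$, so that $a_t=1$ and for $k\in\{1,\ldots,t-1\}$ we have $a_k = 1 + \sum_{i=k}^{t-1} d_i$. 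First I would verify that this substitution identifies the two sets of hypotheses exactly: the largest gap-offset is $a_1 = 1+\sum_{i=1}^{t-1}d_i$, so Theorem \ref{existencia_with_gaps} Condition (1) reads $g\ge 2a_1+t-1 = 2\sum_{i=1}^{t-1}d_i + t + 1$, which is precisely Condition (1) here. For Condition (2), observe that with this indexing the sum $a_n + a_m$ (over $n,m\in\{1,\ldots,t\}$) matches $2 + \sum_{i=n}^{t-1}d_i + \sum_{j=m}^{t-1}d_j$; shifting by the constant $2$ does not change cardinalities, and the corollary's index set $\{1,\ldots,t-1\}$ together with the convention for $a_t=1$ (the empty sum) recovers the full range $\{1,\ldots,t\}$, so the cardinality condition transfers verbatim. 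Hence $\mathbf{d}$ satisfying (1)--(2) yields an admissible $\{a_i\}$, and Theorem \ref{existencia_with_gaps} gives that the corresponding $H$ is a $2$-Buchweitz $PF$-semigroup, i.e.\ $\mathbf{d}$ is a $(g,t)$-Buchweitz sequence.

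**The reversal symmetry.**

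For the reversed sequence $\mathbf{d}'=(d_{t-1},\ldots,d_1)$, I would check that reversal induces a symmetry on the partial-sum data that preserves both conditions. Writing $a'_k = 1 + \sum_{i=k}^{t-1} d'_i$ for the reversed sequence, the crucial observation is that reversing $\mathbf{d}$ sends the multiset of suffix-sums $\{\sum_{i=k}^{t-1} d_i : k\}$ to the multiset of \emph{prefix}-sums $\{\sum_{i=1}^{k} d_i : k\}$. Condition (1) depends only on the total sum $\sum_{i=1}^{t-1}d_i$, which is reversal-invariant, so it is immediate. For Condition (2), I expect the key step to be showing that the pairwise-sum set built from suffix-sums of $\mathbf{d}'$ has the same cardinality as that built from $\mathbf{d}$. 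This should follow because the map $k\mapsto t-k$ sets up a bijection between the two index ranges under which each suffix-sum of $\mathbf{d}'$ equals a prefix-sum of $\mathbf{d}$, and complementing against the fixed total sum $S=\sum_i d_i$ (i.e.\ using $\sum_{i=1}^{k}d_i = S - \sum_{i=k+1}^{t-1}d_i$) sends pairwise sums to pairwise sums via an affine, hence cardinality-preserving, map of the form $x\mapsto 2S - x + c$.

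**Main obstacle.**

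The main obstacle will be bookkeeping the index shifts carefully, since the statement mixes two conventions: the $a_i$ are indexed $1,\ldots,t$ with $a_t=1$ forced, while the $d_i$ are indexed $1,\ldots,t-1$, and the empty-sum convention for $a_t$ must be handled explicitly so that the index set $\{1,\ldots,t-1\}$ in the corollary's Condition (2) genuinely reproduces the full range $\{1,\ldots,t\}$ in the theorem. I would state the precise dictionary $a_k=1+\sum_{i=k}^{t-1}d_i$ once, note that $a_1>\cdots>a_{t-1}>a_t=1$ holds automatically because each $d_i\ge 1$, and then let the two conditions transfer mechanically; the reversal claim then reduces to the affine/complementation bijection above, so no genuinely new combinatorial content beyond Theorem \ref{existencia_with_gaps} is required.
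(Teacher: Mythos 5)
Your proposal is correct and takes essentially the same route as the paper, whose entire proof is to invoke Theorem \ref{existencia_with_gaps} via the dictionary $d_i=a_i-a_{i+1}$ (equivalently $a_k=1+\sum_{i=k}^{t-1}d_i$) and to handle the reversed sequence by $d'_i=d_{t-i}$, with your complementation bijection $x\mapsto 2S+4-x$ merely making explicit what the paper leaves as ``proceed similarly.'' One small imprecision: the corollary's index set $\{1,\ldots,t-1\}$ does \emph{not} recover the full range $\{1,\ldots,t\}$ of the theorem (sums involving $a_t=1$, e.g.\ $a_t+a_t$, need not be captured), but this is harmless because the corollary's set is contained in a shift of the theorem's set, so the cardinality hypothesis transfers in the direction the reduction requires.
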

\begin{proof}
Proceed similarly to the proof of Theorem \ref{existencia_with_gaps} taking $d_i=a_i-a_{i+1}$ for $i=1, \ldots ,t-1$, and $d'_i=d_{t-i}$  for $i=1, \ldots ,t-1$.
\end{proof}

Note that from the previous result we obtain an algorithm for checking the existence of $2$-Buchweitz $PF$-semigroups for a fixed sequence. The sketch of this algorithm is Algorithm \ref{algoritmo_check_PFSemig}.

\begin{algorithm}[H]\label{algoritmo_check_PFSemig}
	\KwIn{$\mathbf{d}=(d_1,\ldots ,d_{t-1})\in(\mathbb{N}\setminus\{0\})^{t-1}$}
	\KwOut{ The set of $g\in\N$ satisfying 1 and 2 in Corollary \ref{existencia}.
	}
\Begin
{
    $f\leftarrow 2g-2t+1$\\
    $pf\leftarrow \{f\}$\\
    \For{$i\leftarrow t-1$ \KwTo $1$}
    {
        $pf\leftarrow\{pf-d_i\}\cup pf$\;
    }
    $cardAB\leftarrow (f+g-t)+1$\;
    $cardC\leftarrow\#(2pf)$\;
    $G_2\leftarrow cardAB + cardC$\;
    $ineq\leftarrow \{G_2 >3(g-1)\}\cup\{g\geq 2\sum_{i=1}^{t-1}d_i+t+1\}$\;
    $g\leftarrow solve(ineq)$\;
	\Return $g$
}
\caption{Algorithm to check if a sequence is the sequence of a family of $2$-Buchweitz PF-semigroups.}
\end{algorithm}

We illustrate Corollary \ref{existencia} and Algorithm \ref{algoritmo_check_PFSemig} with some easy examples.

\begin{example}\label{exK}
Fix the sequence $\mathbf{d}=(1,3,3,2)$, the minimal integer $g$ obtained from Algorithm \ref{algoritmo_check_PFSemig} is $g=24$. Thus, the semigroups with genus greater than or equal to $24$ and associated sequence $\mathbf{d}$ are $2$-Buchweitz $PF$-semigroups. 

To obtain this type of sequence is really simple and other examples are given by the following sequences: $(1,4,3)$ for $g\geq 22$, $(2,4,3)$ for $g\geq 23$, etc.
\end{example}

Table \ref{Table} compares the number of numerical semigroups, $2$-Buchweitz semigroups and $2$-Buchweitz $PF$-semigroups. Some of the results appearing in this table are included in \cite{KomedaSForum}.
\begin{table}[h]
\centering
{
\begin{tabular}{|c|c|c|c|}
  \hline
  Genus & NS & 2-BS & 2-BPFS  \\
  \hline
16  & 4806 & 2 & 2   \\ \hdashline
17  & 8045 & 6 & 3    \\ \hdashline
18  & 13476 & 15 & 10    \\ \hdashline
19  & 22464 & 31 & 19    \\ \hdashline
20  & 37396 & 67 & 35    \\ \hdashline
21  & 62194 & 145 & 72    \\ \hdashline
22  & 103246 & 293 & 146    \\ \hdashline
23  & 170963 & 542 & 257    \\ \hdashline
24  & 282828 & 1053 & 469    \\ \hdashline
25  & 467224 & 1944 & 795    \\ \hdashline
26  & 770832 & 3591 & 1497    \\ \hdashline
27  & 1270267 & 6584 & 2655    \\ \hdashline
28  & 2091030 & 11871 & 4555    \\ \hdashline
29  & 3437839 & 20987 & 7745    \\ \hdashline
30  & 5646773 & 37598 & 13450    \\ \hdashline
31  & 9266788 & 66330 & 23108    \\ \hdashline
32  & 15195070 & 116501 & 38944    \\ \hdashline
33  & 24896206 & 203300 & 64873    \\ \hdashline
34  & 40761087 & 353978 & 110576    \\ \hdashline
35  & 66687201 & 615762 & 187966    \\ %\hdashline
%  & \vdots  &   &      \\ %\hdashline
\hline
\end{tabular}
}
\caption{Number of numerical semigroups (NS) and $2$-Buchweitz semigroups (2-BS) compared with number of $2$-Buchweitz $PF$-semigroups (2-BPFS) up to genus $35$.}\label{Table}
\end{table}

Given two Buchweitz sequences, it can be constructed a new Buchweitz sequence. This result allows us to obtain $2$-Buchweitz semigroups with genus as large as wanted.

\begin{theorem}\label{pegando_secuencias}
Let $\mathbf{d}=(d_1,\ldots, d_{t-1})\in(\mathbb{N}\setminus\{0\})^{t-1}$ and $\mathbf{d'}=(d'_1,\ldots, d'_{h-1})\in(\mathbb{N}\setminus\{0\})^{h-1}$ be two $(g,t)$-Buchweitz and $(g',h)$-Buchweitz sequences (respectively) satisfying Corollary \ref{existencia}. For every integer $k\ge 1$, if  $d'_{h-1}> k$, then
$${\mathbf d''}=(d'_1,\ldots, d'_{h-1}, k ,d_1,\ldots, d_{t-1})$$
is a $(g'',t+h)$-Buchweitz sequence for every integer $g''\ge g+g'+2k-1$.
\end{theorem}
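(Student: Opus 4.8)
The plan is to reduce the statement to the two numerical conditions of Corollary \ref{existencia} (equivalently Theorem \ref{existencia_with_gaps}) applied to $\mathbf{d}''$. Since $\mathbf{d}''$ has length $(h-1)+1+(t-1)=t+h-1$, the associated semigroup $H''$ has type $t+h$, and I will work throughout with the values $a''_1>\cdots>a''_{t+h}=1$ given by $a''_s=1+\sum_{i=s}^{t+h-1}d''_i$, which encode the pseudo-Frobenius numbers. Computing these tail sums shows they split into two blocks: writing $A=a_1$, there is a \emph{low block} $a''_{h+j}=a_j$ for $j=1,\dots,t$ (the values attached to $\mathbf{d}$) and a \emph{high block} $a''_i=a'_i+(k+A-1)$ for $i=1,\dots,h$ (the values attached to $\mathbf{d}'$, rigidly shifted up by $k+A-1$), the junction value being $a''_h=A+k$. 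Thus the whole of $H''$ is governed by the two original families through this shift.

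For Condition 1 I would simply combine the hypotheses $g\ge 2\sum_i d_i+t+1$ and $g'\ge 2\sum_i d'_i+h+1$ with $2k$ and check that $g+g'+2k-1\ge 2\big(\sum_i d_i+k+\sum_i d'_i\big)+(t+h)+1$, which is exactly Condition 1 of Corollary \ref{existencia} for $\mathbf{d}''$, whose entry-sum is $\sum_i d_i+k+\sum_i d'_i$. Hence every $g''\ge g+g'+2k-1$ satisfies it.

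The heart of the proof is Condition 2: the number of distinct pairwise sums of the $a''_s$ must exceed $3(t+h-1)$. The pairwise sums \emph{within} the low block are exactly $\{a_i+a_j\}$, all lying in $[2,2A]$, while the sums within the high block are $\{a'_i+a'_j\}+2(k+A-1)$, all lying in $[2A+2k,\infty)$; since $k\ge 1$ these two ranges are disjoint. As $\mathbf{d}$ and $\mathbf{d}'$ are Buchweitz, $N:=\#\{a_i+a_j\}\ge 3t-2$ and $N':=\#\{a'_i+a'_j\}\ge 3h-2$, so the two blocks already furnish $N+N'\ge 3(t+h-1)-1$ distinct sums. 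To finish I exhibit two further cross sums, lying in neither block. I take $a''_h+a_1=2A+k$ and $a''_{h-1}+a_1=2A+k+d'_{h-1}$. The first sits in the open gap $(2A,2A+2k)$, hence equals no block sum. For the second I use $d'_{h-1}>k$: it forces $2A+k+d'_{h-1}$ to lie strictly between the two smallest high-block sums $2A+2k$ and $(a'_{h-1}+a'_h)+2(k+A-1)=2A+2k+d'_{h-1}$, and since no $a'$-sum lies strictly between $2a'_h$ and $a'_{h-1}+a'_h$, this value is not a high-block sum. Both bridging sums exceed $2A$ (so are not low-block sums) and are distinct from each other, giving at least $N+N'+2\ge 3(t+h-1)+1>3(t+h-1)$ distinct sums, as required.

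I expect the main obstacle to be precisely this last count: the disjoint-blocks estimate falls one short of strictness, so the theorem hinges on producing two genuinely new cross sums. The hypothesis $d'_{h-1}>k$ is exactly what secures the second of them, by landing $a''_{h-1}+a_1$ inside the first gap of the high block; were $d'_{h-1}\le k$, this cross sum could coincide with a high-block sum and the bound would degrade to the non-strict $3(t+h-1)$. Everything else (Condition 1, the block decomposition, and the disjointness of the two ranges) is routine bookkeeping once the shift structure of the $a''_s$ is recorded.
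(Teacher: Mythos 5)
Your proposal is correct and follows essentially the same route as the paper: the identical decomposition $A''=A\sqcup\bigl(A'+(a_1+k-1)\bigr)\cup\{a_1+k\}$, the same verification of Condition 1, and the same two bridging sums $2a_1+k$ and $2a_1+k-1+a'_{h-1}$ (your $2A+k$ and $2A+k+d'_{h-1}$) to push the count from $3(t+h-1)-1$ past $3(t+h-1)$, with $d'_{h-1}>k$ used exactly as in the paper to place the second sum strictly inside the first gap of the shifted block.
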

\begin{proof}
Let $A=\{a_1>\cdots > a_{t-1}>a_t=1\}\subset \N$ and $A'=\{a'_1>\cdots > a'_{h-1}>a'_h=1\}\subset \N$ be the sets such that $d_i=a_i-a_{i+1}$ for $i=1, \ldots ,t-1$, and $d'_i=a'_i-a'_{i+1}$ for $i=1, \ldots ,h-1$. Consider $a''_i= a_{i-h}$ for $i=h+1,\ldots ,t+h$, $a''_i= a'_i+a_1+k-1$ for $i=1,\ldots ,h$. Define $A''= \{a''_1, \ldots , a''_{h+t}=1\}$. We have that
$$A''=\left\{
a'_1+a_1+k-1, \ldots , a'_{h-1}+a_1+k-1,k+a_1, 
a_1,\ldots , a_{t-1},a_t=1\right\}.$$
Note that if $\mathbf{d}''=(d''_1,\ldots, d''_{h+t-1})$, then $d''_i=a''_i-a''_{i+1}$ for every $i=1, \ldots ,h+t-1$. Since $g''\ge g+g'+2k-1$, we obtain that $g''\ge 2\sum_{i=1}^{h-1}d'_i+h+1 + 2\sum_{i=1}^{t-1}d_i+t+1 +2k-1= 2a''_1+h+t-1$.

To determine if $\mathbf{d}''$ is a Buchweitz sequence, we study the cardinality of $2A''$. Note that $2A=\{2=2a_t< 1+a_{t-1} < \cdots <2a_1\}$, $2A'=\{2=2a'_h< 1+a'_{h-1} < 1+a'_{h-2}<\cdots <2a'_1\}$, and
\begin{multline*}
2A''=\big\{2=2a_{t}< 1+a_{t-1} <\cdots <2a_1< \cdots < 2a_1+k < \cdots\\  <2a_1+2(k-1)+2<2a_1+2(k-1)+1+a'_{h-1}<\cdots  < 2a_1+2(k-1)+2a'_1\big\}.
\end{multline*}
Thus, 
$B=2A\sqcup (2(a_1+k-1)+2A')\subset 2A''$ and $\#(2A'')\ge 3(t-1)+1+3(h-1)+1$. 
Note that $2a_1+k\in 2A''\setminus B$. %, $\#(2A'')\ge 3(h+t-1)$. 

Let $a'_{h-1}+k-1+2a_1\in 2A''$, we know that  $a'_{h-1}+k-1+2a_1<2a_1+2(k-1)+1+a'_{h-1}$. Since $d'_{h-1}>k$, then $a'_{h-1}+k-1+2a_1>2a_1+2k$ and $a'_{h-1}+k-1+2a_1\in 2A''\setminus B$. 
Hence, $2A\sqcup (2(a_1+k-1)+2A')\sqcup \{2a_1+k,a'_{h-1}+k-1+2a_1\}\subset 2A''$ and therefore $\#(2A'')\ge 3(h-1)+3(t-1)+4= 3(h+t-1)+1>3(h+t-1)$. By Theorem \ref{existencia_with_gaps}, the semigroup associated with $\mathbf{d}''$ is a $2$-Buchweitz $PF$-semigroup for every integer $g''\ge g+g'+2k-1$.
\end{proof}

The condition $d'_{h-1}>  k$ in Theorem \ref{pegando_secuencias} cannot be removed. For $k=1$ and every large enough genera $g$ and $g'$, consider the (g,5)-Buchweitz sequence (1,2,2,1), and the (g',5)-Buchweitz sequence (2,3,1,1). For every genus, the numerical semigroups associated with the sequences (1,2,2,1,{\bf 1},1,2,2,1), and (2,3,1,1,{\bf 1},1,2,2,1) are not 2-Buchweitz.

From the examples of Buchweitz sequences obtained from Algorithm \ref{algoritmo_check_PFSemig} and their associated 2-Buchweitz semigroups, and by using Theorem \ref{pegando_secuencias}, it is easy to generate several $2$-Buchweitz semigroups with large genera.

\begin{example}
If we take the sequences $(2,4,3)$ and $(1,4,3)$ from Example \ref{exK}, we know that the following sequences are $(g,t)$-Buchweitz 

\begin{center}
\begin{tabular}{rl}
$(2,4,3)$ & for $g\geq 23$,\\ 
$(1,4,3, {\mathbf 2}, 2,4,3)$ &  for $g\geq 48$,\\ 
$(1,4,3, {\mathbf 2}, 1,4,3,2,2,4,3)$ & for $g\geq 73$,\\
$(1,4,3, {\mathbf 2}, 1,4,3,2,1,4,3,2,2,4,3)$ & for $g\geq 98$,\\
$\vdots$
\end{tabular}
\end{center}
In the same way, we can use other sequences to get $2$-Buchweitz semigroups with genera as large as we wish.
\end{example}

{\bf Acknowledgement}. Part of this paper was written during a visit of the third-named author to the Universidad de C\'{a}diz (Spain), his visit was  partially supported by {\em Ayudas para Estancias Cortas de Investigadores} (EST2018‐0, Programa de Fomento e Impulso de la Investigación y la Transferencia en la Universidad de Cádiz).

The third-named author was partially supported by CNPq/Brazil (Grant 310623/2017-0).

The first, second and fourth-named authors were partially supported by Junta de Andalucía research group FQM-366 and by the project MTM2017-84890-P (MINECO/FEDER, UE).

\end{document}